\numberwithin{equation}{section}
\theoremstyle{plain}
\newtheorem{thm}{Theorem}[section]
\newtheorem{lem}[thm]{Lemma}
\newtheorem{defn}[thm]{Definition}
\newtheorem{hypp}[thm]{Hypotheses}
\newcommand{\enter}{\bigskip}
\date{January 10, 2017}
\begin{document}
 \author{{Prasanta Kumar Barik and
 Ankik Kumar Giri}\vspace{.2cm}\footnote{Corresponding author. Tel +91-1332-284818 (O);  Fax: +91-1332-273560  \newline{\it{${}$ \hspace{.3cm} Email address: }}ankikgiri.fma@iitr.ac.in/ankik.math@gmail.com}\\
\footnotesize \small{ \textit{Department of Mathematics, Indian Institute of Technology Roorkee,  Roorkee-247667, Uttarakhand,}}\\ \small{\textit{India}}
  }

\title{A note on mass-conserving solutions to the coagulation-fragmentation equation by using non-conservative approximation }

\maketitle

\hrule \vskip 11pt

\begin{quote}
{\small {\em\bf Abstract.}In general, the non-conservative approximation of coagulation-fragmentation equations (CFEs) may lead to the occurrence of gelation phenomenon. In this article, it is shown that the non-conservative approximation of CFEs can also provide the existence of mass conserving solutions to CFEs for large classes of unbounded coagulation and fragmentation kernels.\enter
}
\end{quote}
\noindent
{\bf Keywords:} Particles, Coagulation, Fragmentation, Mass Conserving Solution, Existence.\\
{\rm \bf MSC (2010).} Primary: 45K05, 45G99, Secondary: 34K30.\\

\vskip 11pt \hrule

\section{Introduction}
An area of substantial concern in engineering and science is the phenomenon of particulate coagulation
and fragmentation in chemical and biochemical process systems such as crystallization, fluidization and activated sludge flocculation. The basic reactions taken into consideration are coalescing of two particles to form a larger particle and the breakage of particles into two daughter fragments.
The coagulation-fragmentation equations (CFEs) are widely used to describe the evolution of the particle size distribution in the above mentioned
processes. The coagulation-fragmentation equations are integro-partial differential equations which describe the particle size distribution $g(y, t)$ of particles of volume $y>0$ at time $t\geq 0$ and  read as, see \cite{Escobedo:2003, Escobedo:2002, Giri:2011I, Stewart:1989, Stewart:1990},
\begin{align}\label{1cfe}
\hspace{-.2cm}\frac{\partial g(y,t)}{\partial t}  = &\frac{1}{2} \int_{0}^{y} K(y-z,z)g(y-z,t)g(z,t)dz - \int_{0}^{\infty}K(y,z)g(y,t)g(z,t)dz\nonumber\\
  &+\int_0^{\infty}\hspace{-.05cm}F(y, z) g(y+z,t)dz- \frac{1}{2} \int_{0}^{y}\hspace{-.05cm}F(y-z,z) g(y,t)dz:=\rho(g)~~\mbox{(say)},
\end{align}
with the initial datum
\begin{eqnarray}\label{1in1}
g(y,0) = g^{in}(y)\geq 0~ \mbox{a.e.},
\end{eqnarray}
where $\rho(g):=\rho_1(g)-\rho_2(g)+\rho_3(g)-\rho_4(g)$. For $i=1,2,3,4$, $\rho_i(g)$ represent the first, second, third and fourth terms respectively on the right-hand side to (\ref{1cfe}). The coagulation kernel, $K(y,z)$, describes the rate at which particles of volumes $y$ unite with particles of volume $z$ to produce larger particles of volume $(y+z)$.\\

The kernel $F$ represents the rate at which  particles of volume $(y+ z)$ breakup into those of volumes $y$ and $z$. This fragmentation kernel $F$ becomes $F(y-z, z)=0$, if $y<z$. It will be assumed throughout the paper that kernels $K$ and  $F$ are nonnegative measurable unbounded functions with $K(y,z)=K(z,y)$ and $F(y, z) = F(z, y)$ for all $y> 0$ and $z> 0$, i.e., symmetric.\\

The first integral $\rho_1(g)$ on the right-hand side of (\ref{1cfe}) represents the formation of particles of volume $y$ after coalescence of particles of volumes $y-z$ and $z$, whereas the second integral $\rho_2(g)$ shows the disappearance  of particles of volume $y$ after combining with particles of volume $z$. The third and fourth integrals, $\rho_3(g)$ and $\rho_4(g)$, respectively describe the appearance and disappearance of particles of volume $y$ due to fragmentation events.\\

An important property of the solution to CFEs (\ref{1cfe})--(\ref{1in1}) is known as mass conservation i.e. the total mass of particles remains conserved in the system, i.e.
 \begin{eqnarray*}
\int_0^{\infty}y g(y,t)dy=\int_0^{\infty}y g^{in}(y)dy,  \ \ t \geq 0.
\end{eqnarray*}
We know that the total mass of particles is neither created nor destroyed during the coagulation-fragmentation events. Therefore, it is expected that the total volume (mass) remains conserved during these events.  However, when coagulation kernel increases sufficiently rapidly compared to the fragmentation kernel for large volume of particles, a runaway growth takes place to produce an infinite  \emph{gel} (super particle) in finite time which are removed from the system. Therefore, the total mass (volume) of the system breaks down, this phenomenon is known as \emph{gelation}, see \cite{Leyvraz:1983, Leyvraz:1981} and the time at which this process starts is known as \emph{gelation time}.\\

The purpose of this note is to show the existence of mass conserving solution of continuous CFEs (\ref{1cfe})--(\ref{1in1}) by choosing a suitable non-conservative approximation of continuous CFEs. There are several mathematical results available on the existence and uniqueness of solutions to continuous CFEs (\ref{1cfe})--(\ref{1in1}) which have been established by using various techniques under different growth conditions on coagulation and fragmentation kernels, see \cite{Dubovskii:1996, Giri:2013I, Giri:2011, Giri:2012, Stewart:1989, Stewart:1990}. In \cite{Stewart:1989}, Stewart has discussed the existence of the weak solution to CFEs (\ref{1cfe})--(\ref{1in1}) for coagulation kernels satisfying  $K(y,z)\leq k[(1+y)^{\alpha}+(1+z)^{\alpha}]$, where $\alpha \in [0,1[$, for some $k>0$ and fragmentation kernels satisfying $F(y,z)\leq c(1+y+z)^{\beta}$, where  $\beta \in [0, 1[$ for some constant $c>0$. A uniqueness result to continuous CFEs (\ref{1cfe})--(\ref{1in1}) has been established by Stewart \cite{Stewart:1990} for unbounded kernels $K$ and $F$, where kernels $K$ and $F$ satisfy the following conditions
 \begin{eqnarray*}
 \hspace{-1.5cm}K(y,z)\leq k(1+y)^{1/2}(1+z)^{1/2}
 \end{eqnarray*}
 and\begin{eqnarray*}
\int_0^y(1+z)^{1/2}F(y-z,z)dz \leq m(1+y)^{1/2}.
\end{eqnarray*}
 Several authors have also discussed the existence of mass-conserving solutions, when $K(y,z)\leq A(1+y+z)$ for some $A>0$, under various assumptions on fragmentation kernels, see \cite{Dubovskii:1996, Laurencot:2002L}. Particularly, in \cite{Laurencot:2002L}, Lauren\c{c}ot and Mischler have considered fragmentation kernels as $F(y,z)\leq k(1+y+z)$ to the existence of solution to the discrete version of (\ref{1cfe})--(\ref{1in1}). In \cite{Escobedo:2003}, Escobedo et. al. have also shown the existence of mass-conserving solutions to  (\ref{1cfe})--(\ref{1in1}) under strong fragmentation for bilinear growth condition on coagulation kernels. The proof relies on the weak $L^1$ Compactness method  which is originally introduced by Stewart \cite{Stewart:1989}. In \cite{Dubovskii:1996}, Dubovski and Stewart have discussed the existence of mass conserving solutions to CFEs by using a different approach. The classes of coagulation and fragmentation kernels, which they have considered, also cover the hypotheses $(H1)-(H4)$ considered in this article. In addition, they have shown the uniqueness of solution to CFEs under the following additional restriction on  fragmentation kernels
 \begin{eqnarray*}
 \int_0^y F(y-z, z)dz \leq b(1+y)^{m_1}, ~~~~\mbox{for}~~m_1\leq 1.
 \end{eqnarray*}

 Due to the unavailability of a uniqueness result to (\ref{1cfe})--(\ref{1in1}) for $K$ and $F$ satisfying $(H3)$ and  $(H4)$ respectively, it is unclear that the solution to (\ref{1cfe})--(\ref{1in1}) provided by a non-conservative approximation is mass conserving or not.  In general, it is known that a non-conservative approximation to Smoluchowski coagulation equations (SCEs) may lead to the gelation phenomenon.   In 2004,  Filbet and Lauren\c{c}ot \cite{Filbet:2004II} have developed a finite volume scheme to demonstrate the occurrence of gelation to SCEs by using a non-conservative approximation. Moreover, they have seen from some experimental results that for the large computational domain the loss of mass is decreased. Therefore, it is expected that the non-conservative approximation may also give a mass conserving solution to SCEs as the upper limit of the domain of truncation tends to infinity. Further, they have provided a mathematical proof for this observation in \cite{Filbet:2004I}. Later in 2008, Bourgade and Filbet \cite{Bourgade:2008} have generalized the finite volume scheme of \cite{Filbet:2004II} to CFEs. By performing some numerical computations, they have also studied the occurrence of gelation to CFEs which appeared due to the finite interval of computations. In case of CFEs, they have concluded similar observations as in \cite{Filbet:2004II} that the loss of mass can be decreased by taking a sufficiently large computational domain. This gives a positive hope to achieve a mass conserving solution for CFEs as well by considering a non-conservative truncation. Therefore, the aim of this article is to show mathematically that the mass lost due to the non-conservative truncation converges to zero in the limiting case. This result is not very surprising but certainly important from the point of view of mathematical clarity and correctness. The motivation of the present work is from \cite{Ball:1990}, \cite{Bourgade:2008}, \cite{Filbet:2004I} and \cite{Filbet:2004II}.\\

Let us provide a brief plan of the article. In section 2, we mention some hypotheses, definitions, conservative and non-conservative approximations to CFEs. In addition, the existence of mass-conserving solution to CFEs by considering a conservative approximation is recalled. This section also contain the existence of solutions of the non-conservative truncation of CFEs. These solutions may not satisfy the mass conserving property. At the end of section 2, the main result on the existence of mass conserving solution to CFEs with non-conservative approximation is stated. In section 3, the Dunford-Pettis theorem is applied to the family of solutions of non-conservative truncations to CFEs. Further, equicontinuity argument with respect to time  helps us to use a refined version of \textit{Arzel\`{a}-Ascoli theorem}. Moreover, the main existence theorem is proved in this section.

 \section{Preliminaries and main result}
 In order to prove the Theorem \ref{thm1} for the existence of mass conserving solution to (\ref{1cfe})--(\ref{1in1}), we consider the following hypotheses.
\begin{hypp}\label{hyppmcs}
(H1) $K$ and $F$ are non-negative measurable functions on $]0,\infty[ \times ]0,\infty[$,\\
\\
(H2) $K$ and $F$ are symmetric, i.e. $K(y,z)=K(z,y)$  and $F(y,z)=F(z,y)$ for all $(y,z) \in ]0,\infty[ \times ]0,\infty[ $, \\
\\
(H3) $K(y,z)\leq k_1(1+y+z)$ for all $(y,z)\in ]0,\infty[ \times ]0,\infty[ $ and for some constant $k_1>0$,\\
\\
(H4) $ F(y, z)\leq k_2(1+y+z),$ for all  $ (y,z)\in ]0,\infty[ \times ]0,\infty[ $  where  $ k_2 > 0.$
\end{hypp}

\begin{defn}\label{definition}
 A non-negative real valued function $g=g(y,t)$ is a weak solution to (\ref{1cfe})--(\ref{1in1}), if $g \in \mathcal{C}([0, \infty);w- L^1(0, \infty))\bigcap L^{\infty}(0, \infty;L^1_1(0, \infty))$ and $\rho_i(g)\in L^1((0,M)\times (0,T))$, for $M>0$, $T>0$ and $\{i=1,\cdot, \cdot, 4\}$, and
\begin{align}\label{wscfe}
&\int_0^{\infty}[ g(y,t) - g^{in}(y)]\omega(y)dy\nonumber\\=&\frac{1}{2}\int_0^t \int_0^{\infty} \int_{0}^{\infty}\tilde{\omega}(y,z) K(y,z)g(y,s)g(z,s)dzdyds\nonumber\\
&-\frac{1}{2}\int_0^t \int_0^{\infty} \int_0^{\infty} \tilde{\omega}(y,z)  F(y, z) g(y+z,s)dzdyds,
\end{align}
and the last integral on the right-hand side to (\ref{wscfe}) can also be written in the following way:
\begin{eqnarray*}
-\frac{1}{2}\int_0^t \int_0^{\infty} \int_0^{\infty} \tilde{\omega}(y,z)  F(y, z) g(y+z,s)dzdyds=\frac{1}{2}\int_0^t \int_0^{\infty} k_{\omega}(y)g(y,s)dyds,
\end{eqnarray*}
where
\begin{eqnarray}\label{omega1}
\hspace{-.9cm}\tilde{\omega} (y,z):=\omega(y+z)-\omega (y)-\omega(z)
\end{eqnarray}
and
\begin{eqnarray}\label{komega}
 k_{\omega}(y):=-\int_0^y F(z, y-z)\tilde{\omega} (z,y-z) dz,
\end{eqnarray}
for every $t > 0$ and $\omega \in \mathcal{C}_c^{\infty}(0, \infty)$, where $L_1^1(0, \infty):=L^1((0, \infty ) ; (1+y)dy)$ and $\mathcal{C}_c^{\infty}(0, \infty)$ is the space of all infinitely continuously differentiable functions with compact support.
\end{defn}

Now we define the characteristic function $\chi_{\mathbb{E}}$ on a set $\mathbb{E}$ as
 \begin{displaymath}
   \chi_\mathbb{E}({y}) := \left\{
     \begin{array}{lr}
       1 & ~ \mbox{if} ~ {y}\in \mathbb{E},\\
       0 & ~ ~ \mbox{if}~ {y} \notin \mathbb{E}.
     \end{array}
   \right.
\end{displaymath}

 Next, we construct a mass conserving solution relying on the conservative approximation to CFEs (\ref{1cfe})--(\ref{1in1}), which is defined as: for a given  natural number $n\in\mathbb{N}$, we set
\begin{eqnarray*}
g_n^{in}(y):=g^{in}(y)\chi_{]0,n[}(y), ~~~~~~~K_n^c(y,z):=K(y,z)\chi_{]0,n[}(y+z)
\end{eqnarray*}
and
\begin{eqnarray*}
\hspace{-4cm}F_n^c(y, z):=F(y,z)\chi_{]0,n[}(y+z),
\end{eqnarray*}
which gives the following conservative approximation to (\ref{1cfe})--(\ref{1in1}) as:
\begin{align}\label{tcfe}
\frac{\partial \tilde{g}_n(y,t)}{\partial t}  = &\frac{1}{2} \int_{0}^{y} K(y-z,z)\tilde{g}_n(y-z,t)\tilde{g}_n(z,t)dz - \int_{0}^{n-y}K(y,z)\tilde{g}_n(y,t)\tilde{g}_n(z,t)dz\nonumber\\
  &+\int_0^{n-y}F(y, z) \tilde{g}_n(y+z,t)dz- \frac{1}{2} \int_{0}^{y}F(y-z,z) \tilde{g}_n(y,t)dz,
\end{align}
with the truncated initial condition
\begin{eqnarray}\label{1tnin1}
\tilde{g}_n(y,0)=g_n^{in},  ~~ \text{for}~~ y\in ]0,n[.
\end{eqnarray}
Considering $(H1)-(H4)$  and $g_n^{in} \in L^1_1(0, \infty)$, we may show as in \cite{Stewart:1989}, there exists a unique solution $g \in \mathcal{C}( [0, \infty ); L^1(0,n) )$ to (\ref{tcfe})--(\ref{1tnin1}) such that
\begin{eqnarray*}
\int_0^{n} y\tilde{g}_n(y,t)dy= \int_0^{n} y g_n^{in}(y)dy ~~~~ \text{for~all} ~~t\geq 0.
\end{eqnarray*}

Again using $(H1)-(H4)$, there is a subsequence $(\tilde{g}_{n_k})$ of $(\tilde{g}_n)$ such that
 \begin{eqnarray*}
\tilde{g}_{n_k} \rightarrow g, ~~~\mbox{in}~~\mathcal{C}([0,T]; w-L^1_1(0, \infty))~~~\mbox{as}~~n_k \to \infty.
 \end{eqnarray*}
 Hence, $g$ is indeed a solution to (\ref{1cfe})--(\ref{1in1}). Consequently, $g$ is mass conserving, i.e
\begin{eqnarray*}
M_1(t):=\int_0^{\infty}yg(y,t)dy=\int_0^{\infty}yg^{in}(y)dy:=M_1^{in}.
\end{eqnarray*}

 Here, the space of all weakly continuous functions from $[0, T]$ to $L_1^1(0, \infty )$ is denoted by $ \mathcal{C}( [0,T]; w-L^1_1(0, \infty))$  and  if
\begin{eqnarray*}
\lim_{n \to \infty} \sup_{t \in [0,T[}\bigg|\int_0^{\infty}(1+y)[g_n(y,t)-g(y,t)] \omega(y)dy\bigg|=0,
\end{eqnarray*}
for every $\omega \in L^{\infty}(0, \infty )$, then we say that a sequence $(g_n)$ converges to $g$ in $\mathcal{C}( [0,T]; w-L^1_1(0, \infty ))$.\\

There is a possibility to have different approximations to CFEs (\ref{1cfe})--(\ref{1in1}) which are not the conservative one i.e. (\ref{tcfe}), see \cite{Bourgade:2008}. However, in order to study the gelation phenomenon, a non-conservative approximation of coagulation and a conservative truncation of fragmentation is required which can be constructed as follows: for a given $n\in \mathbb{N}$, we define
\begin{eqnarray*}
g_n^{in}(y):= g^{in}(y)\chi_{]0,n[}(y),~ K_n^{nc}(y,z):=K(y,z)\chi_{]0,n[}(y)\chi_{]0,n[}(z)
\end{eqnarray*}
 and
\begin{eqnarray*}
\hspace{-4.8cm} F_n^c(y,z):=F(y,z)\chi_{]0,n[}(y+z),
\end{eqnarray*}
which gives the following non-conservative approximation to (\ref{1cfe})--(\ref{1in1})
\begin{align}\label{tncfe}
\frac{\partial g_n(y,t)} {\partial t}  = &\frac{1}{2} \int_{0}^{y} K(y-z,z)g_n (y-z,t)g_n (z,t)dz - \int_{0}^{n}K(y,z)g_n(y,t)g_n(z,t)dz\nonumber\\
  &+\int_0^{n-y}F(y, z) g_n(y+z,t)dz- \frac{1}{2} \int_{0}^{y}F(y-z,z) g_n(y,t)dz\nonumber\\
   :=&\rho^n{(g_n)}~\mbox{(say)},
\end{align}
with the truncated initial condition
\begin{eqnarray}\label{1nin1}
g_n(y,0)=g_n^{in},  ~~ \text{for} ~~y\in ]0,n[,
\end{eqnarray}
where $\rho^n:=\rho_1^n-\rho_2^n+\rho_3^n-\rho_4^n$, and $\rho_1^n$, $\rho_2^n$, $\rho_3^n$ and $\rho_4^n$ represent the first, second, third and fourth integrals respectively on the right-hand side to (\ref{tncfe}).\\

Now, it can easily be verified from (\ref{tncfe})--(\ref{1nin1}) that the total mass may not remain conserved. i.e.
\begin{align}\label{tncfe1}
\int_0^n{ yg_n(y,t)}dy =&\int_0^n{ yg_n^{in}(y)}dy\nonumber\\
&-\frac{1}{2}\int_0^t \int_0^n \int_{n-y}^n (y+z)K(y,z)g_n(y,s)g_n(z,s)dzdyds.
\end{align}
This implies that
\begin{eqnarray}\label{tncfe2}
\int_0^n{ yg_n(y,t)}dy \leq \int_0^n{ yg_n^{in}(y)}dy.
\end{eqnarray}
 From (\ref{tncfe1}), we interpret that the total volume (mass) may not be conserved.\\ Similarly, we show the existence and uniqueness of a non-negative solution $g_n \in \mathcal{C}([0,\infty); L^1(0,n))$ to (\ref{tncfe})--(\ref{1nin1}) by using a classical fixed point theorem, but this solution $g_n$ does not satisfy mass conserving property.\\

For the sake of information, we would like to mention that there is also another non-conservative approximation which is different from above. In this approximation, non-conservative form of coagulation and non-conservative form of fragmentation are considered, see \cite{Bourgade:2008}. The non-conservative coagulation and non-conservative fragmentation equation is given by
\begin{align*}
\frac{\partial \tilde{g_n}(y,t)}{\partial t}  = &\frac{1}{2} \int_{0}^{y} K(y-z,z)\tilde{g_n}(y-z,t)\tilde{g_n}(z,t)dz - \int_{0}^{n}K(y,z)\tilde{g_n}(y,t)\tilde{g_n}(z,t)dz\nonumber\\
  &+\int_0^{n}F(y, z) \tilde{g_n}(y+z,t)dz- \frac{1}{2} \int_{0}^{y}F(y-z,z) \tilde{g_n}(y,t)dz.
\end{align*}
In the non-conservative fragmentation part, one can see that the mass of the system with respect to time increases which is not realistic in nature but in case of non-conservative coagulation the total mass decreases in time. Now, the sign of the  addition between these two increasing and deceasing masses is very difficult to determine. So, here we have considered the non-conservative coagulation and conservative fragmentation.\\

Now, we are in a position to state the main theorem of this paper.
\begin{thm}{(Main Theorem)}\label{thm1}
 Assume that hypotheses $(H1)-(H4)$ hold with  the initial datum $g^{in}\in L_1^1{(0, \infty)}$. For $n\geq 1$, we denote $g_n$ the solution to (\ref{tncfe})--(\ref{1nin1}). Then there is a subsequence $(g_{n_k})$ of $(g_{n})$ and a solution $g$ to (\ref{1cfe})--(\ref{1in1}) such that
 \begin{eqnarray*}
 g_{n_k}\to g  ~~~~~~~~  \text{in}~~\mathcal{C}( [0,T]; w-L^1_1(0, \infty )) ~~\text{for ~each}~ T>0
 \end{eqnarray*}
 satisfying the formulation (\ref{wscfe}). Moreover, it satisfies the mass conserving property, i.e.
 \begin{eqnarray*}
 \int_0^{\infty}yg(y,t)dy=\int_0^{\infty}yg^{in}(y)dy.
 \end{eqnarray*}
 \end{thm}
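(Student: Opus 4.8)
The plan is to establish the convergence of a subsequence of the non-conservative approximants $(g_n)$ and then show that the limit is a mass-conserving weak solution. First I would derive the uniform bounds needed to apply a compactness argument: from (\ref{tncfe2}) we immediately have $\sup_{t\geq 0}\|g_n(t)\|_{L^1_1(0,n)}\leq M_1^{in}+\|g^{in}\|_{L^1}$ (controlling both the zeroth and first moments, using that $g^{in}\in L^1_1$). To upgrade this to weak-$L^1$ compactness via the Dunford--Pettis theorem, I would prove a uniform integrability estimate: using the de la Vall\'ee-Poussin theorem, choose a convex function $\Phi$ with $\Phi(0)=0$, $\Phi'$ concave, $\Phi(r)/r\to\infty$, and $\int_0^\infty \Phi(g^{in})\,dy<\infty$; then a Gronwall-type argument applied to $\frac{d}{dt}\int_0^n \Phi(g_n(y,t))\,dy$, exploiting the superlinearity of $\Phi$ together with $(H3)$--$(H4)$ and the linear growth of the kernels, yields $\sup_{t\in[0,T]}\int_0^n \Phi(g_n(y,t))\,dy\leq C(T)$ uniformly in $n$. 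Combined with the moment bound (which handles the behaviour at $y=\infty$), this gives that $\{g_n(t): n\in\mathbb{N},\ t\in[0,T]\}$ is relatively weakly compact in $L^1(0,\infty)$.

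Next I would establish time equicontinuity: for $\omega\in L^\infty(0,\infty)$ (or first for $\omega\in\mathcal{C}_c^\infty$ and then extend by density and the moment bound), one shows $\big|\int_0^\infty (1+y)[g_n(y,t)-g_n(y,s)]\omega(y)\,dy\big|\leq C(T,\omega)|t-s|$ by integrating (\ref{tncfe}) in time against $(1+y)\omega(y)$ and bounding each of the four terms $\rho_i^n$ using $(H3)$, $(H4)$ and the uniform moment bounds; the weak compactness from the previous step makes the modulus of continuity uniform in $n$. A refined Arzel\`a--Ascoli theorem (valued in $L^1_1(0,\infty)$ equipped with its weak topology, on the time interval $[0,T]$) then yields a subsequence $(g_{n_k})$ converging to some $g$ in $\mathcal{C}([0,T];w\text{-}L^1_1(0,\infty))$ for each $T$, with $g\in L^\infty(0,\infty;L^1_1(0,\infty))$. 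Passing to the limit in the weak formulation of (\ref{tncfe}) against a test function $\omega\in\mathcal{C}_c^\infty(0,\infty)$ — here the support of $\omega$ being compact makes the differences between the truncated kernels $K_n^{nc}$, $F_n^c$ and $K$, $F$ disappear for $n$ large, and the uniform integrability controls the quadratic coagulation terms — shows $g$ satisfies (\ref{wscfe}), i.e. $g$ is a weak solution to (\ref{1cfe})--(\ref{1in1}).

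The main obstacle, and the heart of the paper, is the mass-conservation claim: one must show $\int_0^\infty y g(y,t)\,dy = M_1^{in}$, i.e. that the mass defect term in (\ref{tncfe1}),
\begin{eqnarray*}
\Delta_n(t):=\frac{1}{2}\int_0^t \int_0^n \int_{n-y}^n (y+z)K(y,z)g_n(y,s)g_n(z,s)\,dz\,dy\,ds,
\end{eqnarray*}
tends to $0$ as $n\to\infty$. Using $(H3)$, $(y+z)K(y,z)\leq k_1(y+z)(1+y+z)\leq 2k_1(y+z)(1+y)(1+z)$ roughly, but this is not summable on its own; the key is that on the domain of integration either $y>n/2$ or $z>n/2$, so one gains a smallness factor from the tail of the first moment. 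Concretely I would bound $\Delta_n(t)$ by a constant times $\int_0^t \int_{\{y>n/2\}\cup\{z>n/2\}} (1+y)(1+z) g_n(y,s)g_n(z,s)\,dy\,dz\,ds$, and then control the tail integral $\int_{y>n/2}(1+y)g_n(y,s)\,dy$ — this is where the uniform integrability (the $\Phi$-bound) is essential, since it forces the mass in the tail to be small uniformly in $n$ and $s\in[0,T]$, whereas the complementary factor $\int_0^n (1+y)g_n(y,s)\,dy$ stays bounded. Hence $\Delta_n(t)\to 0$ uniformly on $[0,T]$. Finally, combining this with lower semicontinuity of the first moment under weak convergence (which gives $M_1(t)\leq M_1^{in}$) and the reverse inequality obtained by taking $n\to\infty$ in (\ref{tncfe1}), one concludes $\int_0^\infty y g(y,t)\,dy = M_1^{in}$ for all $t\geq 0$, completing the proof.
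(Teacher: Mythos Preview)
Your overall strategy parallels the paper's, but there is a genuine gap in the mass-conservation step, and a smaller oversight in the first estimate.

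\textbf{The zeroth moment.} Inequality (\ref{tncfe2}) controls only the first moment $\int_0^n y\,g_n\,dy$; it says nothing about $\int_0^1 g_n\,dy$, which can grow because fragmentation produces small particles. The paper's Lemma~\ref{V(T)} handles this by testing (\ref{nctp3}) with $\omega=\chi_{]0,1[}$ and using $(H4)$ to get $\int_0^1 g_n(y,t)\,dy \leq \int_0^1 g^{in}_n\,dy + 3k_2 T M_1^{in}$. Thus $\|g_n(t)\|_{L^1_1}\leq V(T)$ with a constant that grows in $T$, not the time-uniform bound you asserted.

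\textbf{The main gap: tail control in $y$.} Your convex function $\Phi$ is applied to the \emph{values} $g_n(y,t)$, and the bound $\int \Phi(g_n)\,dy\leq C(T)$ is an equi-integrability statement: it controls $\int_A g_n\,dy$ for sets $A$ of small measure. It does \emph{not} control the tail $\int_{y>n/2}(1+y)g_n(y,s)\,dy$, since $\{y>n/2\}$ has infinite measure and carries the weight $(1+y)$. Your argument for $\Delta_n(t)\to 0$ therefore breaks precisely at the sentence ``this is where the uniform integrability (the $\Phi$-bound) is essential''. What is actually required is a \emph{second} de~la~Vall\'ee-Poussin function applied to the \emph{variable} $y$: a convex $\sigma_1$ with concave derivative, $\sigma_1(r)/r\to\infty$, and $\int_0^\infty \sigma_1(y)g^{in}(y)\,dy<\infty$. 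The paper proves in Lemma~\ref{massest1} that $\sup_{t\in[0,T]}\int_0^n \sigma_1(y)g_n(y,t)\,dy\leq C(T)$ (the concavity of $\sigma_1'$ is used via (\ref{convex4}) to absorb the coagulation gain) and, simultaneously, that $\int_0^T\!\int_0^n\!\int_{n-y}^n \sigma_1(y)K\,g_n g_n\,dz\,dy\,ds\leq C(T)$. Since on $\{y+z>n\}$ one has $\sigma_1(y)+\sigma_1(z)\geq \sigma_1(n/2)$ while $y+z\leq 2n$, this yields $\Delta_n(t)\leq \frac{2n}{\sigma_1(n/2)}\,C(T)\to 0$ by (\ref{convex1}). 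The same superlinear moment is what genuinely underwrites the compactness in $w$-$L^1_1$ (not merely $w$-$L^1$) that you invoke in the Arzel\`a--Ascoli step: without it, tightness of $(1+y)g_n$ fails and neither the $L^1_1$-convergence nor the vanishing of $\Delta_n$ can be concluded.
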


  In order to prove the Theorem \ref{thm1}, let $g^{in}\in L_1^1(0, \infty )$, from a refined version of de la Vall\'{e}e-Poussin theorem (see \cite{Dellacherie:1975, Filbet:2004I}), we are cognizant that there exist two non-negative convex functions $\sigma_1$ and $\sigma_2$ in $\mathcal{C}^2[0,\infty)$ (space of all twice continuously differentiable functions) such that their derivatives,  $\sigma_1^{'}$ and $\sigma_2^{'}$ are concave with
\begin{eqnarray}\label{convex1}
\sigma_i(0)=0,~~~\lim_{r \to {\infty}}\frac{\sigma_i(r)}{r}=\infty,~~~~i=1,2
\end{eqnarray}
and
\begin{eqnarray}\label{convex2}
\int_0^{\infty}\sigma_1(y)g^{in}(y)dy<\infty,~~~\text{and}~~~\int_0^{\infty}{\sigma_2(g^{in}(y))}dy<\infty.
\end{eqnarray}
Let us state some properties of non-decreasing convex function with concave derivatives, which are required to prove our main result.
\begin{lem}\label{convexlemma}
 Let $r_1, r_2 \in (0, \infty )$ such that, we have the following results
\begin{eqnarray}\label{convex3}
\hspace{-4.7cm}\sigma_2(r_1)\leq r_1\sigma^{'}_2(r_1)\leq 2\sigma_2(r_1)
\end{eqnarray}
and
\begin{eqnarray}\label{convex4}
0\leq \sigma_2(r_1+r_2)-\sigma_2(r_1)-\sigma_2(r_2)\leq 2\frac{r_1\sigma_2(r_2)+r_2\sigma_2(r_1)}{r_1+r_2}.
\end{eqnarray}
\end{lem}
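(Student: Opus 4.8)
The plan is to reduce both displayed inequalities to a handful of one-variable facts about $\sigma_2$ coming from the de la Vall\'{e}e-Poussin construction: $\sigma_2$ is nonnegative, nondecreasing and convex with $\sigma_2(0)=0$, so that $\sigma_2(r)=\int_0^r\sigma_2'(s)\,ds$ with $\sigma_2'\ge 0$ nondecreasing, and moreover $\sigma_2'$ is concave on $[0,\infty)$ with $\sigma_2'(0)=\lim_{h\to 0^+}\sigma_2(h)/h\ge 0$. All the estimates below use only these properties, together with $\sigma_2\in\mathcal{C}^2[0,\infty)$ so that the fundamental theorem of calculus applies.

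For \eqref{convex3}, the left-hand inequality is immediate from monotonicity of $\sigma_2'$, namely $\sigma_2(r_1)=\int_0^{r_1}\sigma_2'(s)\,ds\le r_1\sigma_2'(r_1)$. For the right-hand inequality I would use concavity of $\sigma_2'$ along the chord joining $0$ and $r_1$ together with $\sigma_2'(0)\ge 0$: for $s\in[0,r_1]$,
\[
\sigma_2'(s)\ge\big(1-\tfrac{s}{r_1}\big)\sigma_2'(0)+\tfrac{s}{r_1}\,\sigma_2'(r_1)\ge\tfrac{s}{r_1}\,\sigma_2'(r_1),
\]
and integrating over $[0,r_1]$ gives $\sigma_2(r_1)\ge\tfrac12 r_1\sigma_2'(r_1)$, i.e.\ $r_1\sigma_2'(r_1)\le 2\sigma_2(r_1)$.

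For \eqref{convex4}, the lower bound is the superadditivity of $\sigma_2$: since $\sigma_2$ is convex with $\sigma_2(0)=0$, $\sigma_2(r_i)\le\frac{r_i}{r_1+r_2}\,\sigma_2(r_1+r_2)$ for $i=1,2$, and adding the two inequalities yields $\sigma_2(r_1)+\sigma_2(r_2)\le\sigma_2(r_1+r_2)$. For the upper bound I would first record the subadditivity of $\sigma_2'$, which follows from its concavity and $\sigma_2'(0)\ge 0$ by the same chord argument: $\sigma_2'(a+b)\le\sigma_2'(a)+\sigma_2'(b)$ for all $a,b\ge 0$. Hence
\[
\sigma_2(r_1+r_2)-\sigma_2(r_1)=\int_0^{r_2}\sigma_2'(r_1+s)\,ds\le\int_0^{r_2}\big(\sigma_2'(r_1)+\sigma_2'(s)\big)\,ds=r_2\sigma_2'(r_1)+\sigma_2(r_2),
\]
so that $\sigma_2(r_1+r_2)-\sigma_2(r_1)-\sigma_2(r_2)\le r_2\sigma_2'(r_1)$ and, by the symmetric computation, also $\le r_1\sigma_2'(r_2)$. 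Combining these two bounds with the convex weights $\frac{r_2}{r_1+r_2}$ (on $r_1\sigma_2'(r_2)$) and $\frac{r_1}{r_1+r_2}$ (on $r_2\sigma_2'(r_1)$) gives
\[
\sigma_2(r_1+r_2)-\sigma_2(r_1)-\sigma_2(r_2)\le\frac{r_1 r_2}{r_1+r_2}\big(\sigma_2'(r_1)+\sigma_2'(r_2)\big),
\]
and substituting $\sigma_2'(r_i)\le 2\sigma_2(r_i)/r_i$ from \eqref{convex3} produces exactly the right-hand side of \eqref{convex4}.

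The individual convexity and concavity steps are routine one-variable estimates; the one place I would be careful is the last step of \eqref{convex4}, where the two asymmetric bounds $r_2\sigma_2'(r_1)$ and $r_1\sigma_2'(r_2)$ must be combined with precisely those weights and then fed into \eqref{convex3} in order to land on the constant $2$ rather than something larger. (The companion function $\sigma_1$ has the same structural properties, but only \eqref{convex3}--\eqref{convex4} for $\sigma_2$ are invoked later.)
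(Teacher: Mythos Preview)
Your proof is correct and is precisely the standard argument; the paper itself does not spell it out but simply refers to \cite{Laurencot:2001}, Lemmas~A.1 and~A.2, where the same one-variable estimates are derived in the same way. One small inaccuracy in your closing parenthetical: the paper does apply \eqref{convex4} to $\sigma_1$ as well, in the proof of Lemma~\ref{massest1} (see \eqref{est}).
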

\begin{proof}
The proof of Lemma \ref{convexlemma} is straight forward, see \cite{Laurencot:2001} Lemma A.1, and Lemma A.2.
\end{proof}
We next define a weak formulation for the non-conservative approximation (\ref{tncfe})--(\ref{1nin1}) of CFEs,  for $n\geq 1$, and $\omega \in L^{\infty}(0, \infty )$ as
\begin{align}\label{nctp3}
\int_0^n[g_n(y,t)-g_n^{in}(y)]\omega(y)dy=& \frac{1}{2} \int_0^t \int_0^n\int_0^{n} G_{\omega}(y,z)K(y,z) g_n(y,s)g_n(z,s)dzdyds\nonumber\\
& +\frac{1}{2}\int_0^t\int_0^n k_{\omega}(y)g_n(y,s)dyds,
\end{align}
or
 \begin{align}\label{nct3}
&\int_0^{n}[ g_n(y,t) - g^{in}_n(y)]\omega(y)dy\nonumber\\=&\frac{1}{2}\int_0^t \int_0^{n} \int_{0}^{n} G_{\omega}(y,z) K(y,z)g_n(y,s)g_n(z,s)dzdyds\nonumber\\
&-\frac{1}{2}\int_0^t \int_0^{n} \int_0^{n-z} \tilde{\omega}(y,z)  F(y, z) g_n(y+z,s)dydzds,
\end{align}
where $$ G_{\omega}(y,z):= \omega (y+z)\chi_{]0,n[}(y+z)-\omega(y)-\omega(z), $$
$\tilde{\omega}(y,z)$ and $k_{\omega}(y) $ are defined in (\ref{omega1}) and (\ref{komega}) respectively.

\section{Weak Compactness}
\begin{lem}\label{V(T)}
Let $(H1)-(H4)$ hold and $g^{in}_n \in L_1^1(0,\infty )$. Suppose $g_n$ satisfies (\ref{nctp3}). Then for $T>0$,  there is a  constant $V(T)$ depending on $T$ such that the following inequality holds
\begin{eqnarray*}
\int_0^n(1+y) g_n(y,t)dy \leq V(T).
\end{eqnarray*}
\end{lem}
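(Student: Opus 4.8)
The plan is to estimate separately the zeroth and first moments of $g_n(\cdot,t)$ on $(0,n)$, since $\int_0^n(1+y)g_n(y,t)\,dy$ is simply their sum. The first moment needs no work: the non-conservative mass balance (\ref{tncfe1})--(\ref{tncfe2}) already gives
$$\int_0^n y\,g_n(y,t)\,dy\ \le\ \int_0^n y\,g_n^{in}(y)\,dy\ \le\ \int_0^\infty y\,g^{in}(y)\,dy\ =\ M_1^{in}$$
for every $t\ge 0$ and every $n$, so it remains only to bound the zeroth moment $\int_0^n g_n(y,t)\,dy$ \emph{uniformly in $n$ and $t\in[0,T]$}.

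For the zeroth moment I would first split $\int_0^n g_n(y,t)\,dy=\int_0^{1}g_n(y,t)\,dy+\int_1^n g_n(y,t)\,dy$ and note that the tail is already controlled by the first moment, $\int_1^n g_n(y,t)\,dy\le\int_1^n y\,g_n(y,t)\,dy\le M_1^{in}$, because $y\ge 1$ there. Hence everything reduces to bounding the number of \emph{small} particles $\int_0^{1}g_n(y,t)\,dy$. For this I would plug the bounded (though non-smooth) test function $\omega=\chi_{]0,1[}\in L^\infty(0,\infty)$ into the weak formulation (\ref{nctp3}); this is legitimate since (\ref{nctp3}) is stated for all $\omega\in L^\infty(0,\infty)$, all integrands being integrable because $K,F$ are bounded on $(0,n)^2$ by $(H3)$--$(H4)$ and $g_n(\cdot,s)\in L^1(0,n)$ (alternatively one approximates $\chi_{]0,1[}$ in $\mathcal C_c^\infty$ and passes to the limit). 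For this choice, $G_\omega(y,z)=\chi_{]0,1[}(y+z)-\chi_{]0,1[}(y)-\chi_{]0,1[}(z)\le 0$ for all $(y,z)$, because $\{y+z<1\}\subseteq\{y<1\}\cap\{z<1\}$ (a coalescence event never increases the count of particles of size $<1$); thus the coagulation contribution in (\ref{nctp3}) is $\le 0$ and can be discarded.

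It then remains to estimate the fragmentation term $\frac12\int_0^t\int_0^n k_\omega(y)g_n(y,s)\,dy\,ds$. Using $\tilde\omega(z,y-z)=\omega(y)-\omega(z)-\omega(y-z)$ from (\ref{omega1})--(\ref{komega}) together with the symmetry of $F$, one computes $k_\omega(y)=\int_0^y F(z,y-z)\,dz$ for $y<1$ and $k_\omega(y)=2\int_0^{1}F(z,y-z)\,dz$ for $y\ge 1$; in both cases hypothesis $(H4)$ gives the \emph{linear} bound $k_\omega(y)\le 4k_2\,y$. Consequently the fragmentation term is at most $2k_2\int_0^t\int_0^n y\,g_n(y,s)\,dy\,ds\le 2k_2M_1^{in}T$, so $\int_0^{1}g_n(y,t)\,dy\le\int_0^{1}g_n^{in}(y)\,dy+2k_2M_1^{in}T\le\|g^{in}\|_{L^1(0,\infty)}+2k_2M_1^{in}T$. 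Adding the three bounds yields
$$\int_0^n(1+y)g_n(y,t)\,dy\ \le\ \|g^{in}\|_{L^1(0,\infty)}+2M_1^{in}+2k_2M_1^{in}T\ =:\ V(T),$$
which proves the lemma.

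The argument is short, and the only genuinely delicate point is the computation of $k_\omega$ and $G_\omega$ for the indicator test function and, above all, the observation that $(H4)$ produces a bound on $k_\omega$ that is \emph{linear} in $y$ rather than quadratic: this is exactly what lets the fragmentation term be absorbed by the (uniformly bounded) first moment. Indeed the naive choice $\omega\equiv 1$ gives $k_\omega(y)=\int_0^y F(z,y-z)\,dz$, which $(H4)$ only controls by $k_2\,y(1+y)$, and that cannot be absorbed; restricting the test function to small particles is therefore essential. A secondary, purely technical matter is justifying the use of the non-smooth weight $\omega=\chi_{]0,1[}$ in (\ref{nctp3}).
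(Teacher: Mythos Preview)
Your proof is correct and follows essentially the same approach as the paper: split $\int_0^n(1+y)g_n\,dy$ into the first moment (controlled by (\ref{tncfe2})) and the count of small particles $\int_0^1 g_n\,dy$, then test (\ref{nctp3}) with $\omega=\chi_{]0,1[}$, discard the non-positive coagulation contribution, and bound the fragmentation term linearly in $y$ via $(H4)$. The paper arrives at the constant $V(T)=\int_0^1 g_n^{in}\,dy+(2+3k_2T)M_1^{in}$ by splitting the coagulation integral into six subregions and treating the two fragmentation ranges $y\in(0,1)$ and $y\in(1,n)$ separately, whereas your single observation $G_\omega\le 0$ and unified bound $k_\omega(y)\le 4k_2 y$ are slightly more streamlined; the arguments are otherwise identical.
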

\begin{proof}
Let us simplify the following integral by using (\ref{tncfe2}) as
\begin{align}\label{bound0}
\int_0^n (1+y)g_n(y,t)dy=&\int_0^1 g_n(y,t)dy+\int_1^n g_n(y,t)dy+\int_0^n yg_n(y,t)dy\nonumber\\
 \leq & \int_0^1 g_n(y,t)dy+2\int_0^n yg_n^{in}(y)dy.
\end{align}
Set $\omega(y):=\chi_{]0,1[}(y)$ and substituting it into (\ref{nctp3}) to have
\begin{align}\label{bound1}
\int_0^1 [g_n(y,t)-g_n^{in}(y)] dy =&\frac{1}{2} \int_0^t \int_0^n\int_0^{n} G_{\omega}(y,z)K(y,z)g_n(y,s)g_n(z,s)dzdyds\nonumber\\
& +\frac{1}{2}\int_0^t\int_0^n k_{\omega}(y)g_n(y,s)dyds.
\end{align}
Now, the first integral on the right-hand side of (\ref{bound1}) can be split into following six sub-integrals
\begin{align}\label{Equibound4}
&\frac{1}{2}\int_0^t \int_0^n\int_0^n G_{\omega}(y,z)K(y,z)g_n(y,s)g_n(z,s)dzdyds\nonumber\\
=& \frac{1}{2}\int_0^t \biggl\{ \int_0^1\int_0^{1-y}+\int_0^1\int_{1-y}^1+\int_0^1\int_1^{n-y}+ \int_1^n\int_0^{1}+ \int_1^n\int_1^{n-y}\nonumber\\
&\hspace{1cm}+\int_0^n\int_{n-y}^n   \biggr\}G_{\omega}(y,z)K(y,z)g_n(y,s)g_n(z,s)dzdyds.
\end{align}
From the definition of $G_{\omega}(y,z)$ and $\omega(y)=\chi_{]0,1[}(y)$, one can easily show that all the above sub-integrals on the right-hand side to (\ref{Equibound4}) are less than or equal to zero, which ensures the non-positivity of the first integral on the right-hand side to (\ref{bound1}). Next, the second integral of (\ref{bound1}) can be written as
\begin{align}\label{Equibound21}
\frac{1}{2}\int_0^t\int_0^n k_{\omega}(y)g_n(y,s)dyds=& \frac{1}{2}\int_0^t\int_0^1 k_{\omega}(y)g_n(y,s)dyds\nonumber\\
&+\frac{1}{2}\int_0^t\int_1^n k_{\omega}(y)g_n(y,s)dyds.
\end{align}
Now, using $(H4)$, (\ref{tncfe2}) and $\omega(y)=\chi_{]0,1[}(y)$, the following sub-integral of (\ref{Equibound21}) can be estimated as
\begin{align}\label{bound14}
&\frac{1}{2}\int_0^t\int_0^1 k_{\omega}(y) g_n(y,s)dyds \nonumber\\
=& -\frac{1}{2}\int_0^t\int_0^1  \int_0^y F(z, y-z)[\chi_{]0,1[}(y)-\chi_{]0,1[}(y-z)-\chi_{]0,1[}(z)]g_n(y,s)dz dyds\nonumber\\
 \leq &\frac{1}{2} k_2 \int_0^t\int_0^1 \int_0^y (1+y)g_n(y,s)dzdyds
\leq k_2T \int_0^{n} y g_n^{in}(y)dy.
\end{align}

Further, by taking $\omega(y)=\chi_{]0,1[}(y)$, the last sub-integral of (\ref{Equibound21}) is calculated as
\begin{align}\label{bound15}
&\frac{1}{2}\int_0^t\int_1^n k_{\omega}(y)g_n(y,s) dyds\nonumber\\
=&\frac{1}{2}\int_0^t\int_1^n  \int_0^y F(z, y-z)\chi_{]0,1[}(y-z)g_n(y,s)dz dyds\nonumber\\
&+\frac{1}{2}\int_0^t\int_1^n  \int_0^y F(z, y-z)\chi_{]0,1[}(z)g_n(y,s)dz dyds.
\end{align}
Using the transformation $y-z=z^{'}$, $y=y^{'}$ and the symmetry of $F$ to the first integral on the right-hand side to (\ref{bound15}), this can easily be observed that both integrals on the right-hand side are equal. Therefore, (\ref{bound15}) becomes
\begin{eqnarray*}
\frac{1}{2}\int_0^t\int_1^n k_{\omega}(y)g_n(y,s)dyds=\int_0^t\int_1^n  \int_0^1 F(z, y-z)g_n(y,s)dzdyds,
\end{eqnarray*}
which can further be estimated, by using $(H4)$, as
\begin{align}\label{bound16}
\hspace{-.2cm}\frac{1}{2}\int_0^t\int_1^n k_{\omega}(y)g_n(y,s)dyds \leq & 2k_2\int_0^t\int_1^n y g_n(y,s)dyds \leq  2k_2T\int_0^n y g_n^{in}(y)dy.
\end{align}

Calculating (\ref{Equibound21}) from (\ref{bound14}) and (\ref{bound16}), and using the non-positivity of the integral on the left-hand side of (\ref{Equibound4}), (\ref{bound1}) can be further simplified as
\begin{eqnarray}\label{bound17}
\int_0^1 g_n(y,t)dy \leq \int_0^1 g_n^{in}(y)dy+3k_2T\int_0^n y g_n^{in}(y)dy.
\end{eqnarray}
Now, using  (\ref{bound17}) into (\ref{bound0}) and  $g^{in}_n \in L_1^1(0,\infty )$, we thus have
\begin{eqnarray*}
\int_0^n(1+y) g_n(y,t)dy \leq \int_0^1 g_n^{in}(y)dy +(2+3k_2T)\int_0^n y g_n^{in}(y)dy \leq V(T),
\end{eqnarray*}
where $V(T):= \int_0^1 g_n^{in}(y)dy+(2+3k_2T)M_1^{in}$. This completes the proof of Lemma \ref{V(T)}.
\end{proof}
In order to prove the next lemma, we require one more important property of convex functions, i.e. the non-decreasing convex function $\sigma_2 $ satisfies for $r, s \in (0, \infty )$
\begin{eqnarray}\label{M(T)1}
r\sigma_2{^{'}}(s)\leq \sigma_2(r)+\sigma_2(s).
\end{eqnarray}
Note that (\ref{M(T)1}) follows from  (\ref{convex3}) and  the convexity of $\sigma_2$.

In the following lemma, the equi-integrability of $\{g_n\}_{n\geq 1} \subset L^1_1(0,R)$ is shown by using the convex function $\sigma_2$.
\begin{lem}\label{last}
For any $T>0$ and $R \in (0, n)$, there are two constants $C(R, T)$ and $C_4(R,T)$ such that
\begin{eqnarray*}
\hspace{-2.7cm}(i)  \sup_{t\in [0,T]}\int_0^R \sigma_2(g_n(y,t))dy \leq C(R,T),
\end{eqnarray*}
\begin{eqnarray*}
(ii)  \sup_{t\in [0,T]}\bigg| \frac{d}{dt}\int_0^R g_n(y,t)\omega(y)dy \bigg|\leq C_4(R,T)\|\omega\|_{L^{\infty}(0,R)},
\end{eqnarray*}
for every $n\geq 1$ and $\omega \in L^{\infty}(0,R),$ where $\sigma_2$ is the convex function with concave derivative satisfying (\ref{convex1}) and (\ref{convex2}).
\end{lem}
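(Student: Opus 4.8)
The plan is to derive both estimates from the weak formulation (\ref{nctp3}) by making suitable choices of the test function, combined with the structural inequalities for $\sigma_2$ recorded in Lemma \ref{convexlemma} and in (\ref{M(T)1}), and the uniform first-moment bound $V(T)$ from Lemma \ref{V(T)}. For part (i), I would first note that $\sigma_2$ is $\mathcal{C}^2$ but not compactly supported, so one should argue formally (or via a standard truncation/regularization of $\sigma_2$) that $t\mapsto \int_0^n \sigma_2(g_n(y,t))\,dy$ is differentiable and compute its time derivative by multiplying (\ref{tncfe}) by $\sigma_2'(g_n(y,t))$ and integrating over $y\in(0,n)$. This produces four terms corresponding to $\rho_1^n,\dots,\rho_4^n$. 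The two gain terms ($\rho_1^n$ from coagulation, $\rho_3^n$ from fragmentation) must be controlled from above; the two loss terms ($\rho_2^n$, $\rho_4^n$) have a favorable sign (they contribute $-\sigma_2'(g_n)\,\rho_2^n$ and $-\sigma_2'(g_n)\,\rho_4^n$ with $\rho_2^n,\rho_4^n\ge 0$ and $\sigma_2'\ge 0$), so they can simply be discarded.

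For the coagulation gain term, after symmetrizing in the usual way one obtains an integrand proportional to $K(y,z)g_n(y,s)g_n(z,s)\bigl[\sigma_2'(g_n(y+z,s)) - \text{(a loss contribution)}\bigr]$; more usefully, I would combine the gain part of $\rho_1^n$ with (part of) the loss part of $\rho_2^n$ and use (\ref{M(T)1}) in the form $g_n(z,s)\,\sigma_2'(g_n(y,s)) \le \sigma_2(g_n(y,s)) + \sigma_2(g_n(z,s))$ together with $(H3)$, $K(y,z)\le k_1(1+y+z)$. The point is that the $(1+y+z)$ factor pairs against one of the two $g_n$ factors, and the uniform bound $\int_0^n (1+y)g_n(y,s)\,dy \le V(T)$ converts the resulting double integral into $C\,V(T)\int_0^n \sigma_2(g_n(y,s))\,dy$ plus lower-order terms. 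For the fragmentation gain term $\int_0^n \sigma_2'(g_n(y,s))\int_0^{n-y}F(y,z)g_n(y+z,s)\,dz\,dy$, I would change variables to bring it to $\int F$-weighted integrals of $g_n$ over the larger particle and again invoke (\ref{M(T)1}) plus $(H4)$, $F(y,z)\le k_2(1+y+z)$, so that the $(1+\cdot)$ weight is absorbed by $V(T)$, yielding another term of the form $C(T)\int_0^n \sigma_2(g_n(y,s))\,dy + C(T)$. Collecting everything gives a differential inequality
\[
\frac{d}{dt}\int_0^n \sigma_2(g_n(y,t))\,dy \le C_1(T)\int_0^n \sigma_2(g_n(y,t))\,dy + C_2(T),
\]
and Gronwall's lemma, together with $\int_0^\infty \sigma_2(g^{in}(y))\,dy<\infty$ from (\ref{convex2}) and $\sigma_2(g_n^{in})\le\sigma_2(g^{in})$ (monotonicity of $\sigma_2$ and $g_n^{in}\le g^{in}$), yields $\sup_{t\in[0,T]}\int_0^n \sigma_2(g_n(y,t))\,dy \le C(R,T)$; restricting the domain of integration to $(0,R)$ only makes the left side smaller, so (i) follows with the same constant.

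For part (ii), I would return to (\ref{nctp3}) (or equivalently (\ref{nct3})), observe that $\frac{d}{dt}\int_0^n g_n(y,t)\omega(y)\,dy$ equals the integrand at time $t$, namely $\frac12\int_0^n\int_0^n G_\omega(y,z)K(y,z)g_n(y,s)g_n(z,s)\,dz\,dy + \frac12\int_0^n k_\omega(y)g_n(y,s)\,dy$, and bound each piece in absolute value. Using $|G_\omega(y,z)|\le 3\|\omega\|_{L^\infty}$ and $|k_\omega(y)|\le 3\|\omega\|_{L^\infty}\int_0^y F(z,y-z)\,dz \le C\|\omega\|_{L^\infty}(1+y)^2$-type bounds from $(H4)$, one sees the coagulation term is dominated by $\|\omega\|_{L^\infty}\,k_1\big(\int_0^n(1+y+z)g_n g_n\big)\le C\|\omega\|_{L^\infty}V(T)^2$ and the fragmentation term by $C\|\omega\|_{L^\infty}V(T)$, giving (ii) with $C_4(R,T)$ independent of $n$. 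The main obstacle is the coagulation gain estimate in part (i): one must be careful that the superlinearity of $\sigma_2$ does not outrun the $(1+y+z)$ growth of $K$, and this is exactly where the concavity of $\sigma_2'$ (hence the sublinear-type inequalities (\ref{convex3})--(\ref{convex4}) and (\ref{M(T)1})) is essential — without it the term could not be closed by Gronwall. A secondary technical point is justifying the differentiation of $\int_0^n\sigma_2(g_n)\,dy$ in time, which I would handle by approximating $\sigma_2$ by bounded $\mathcal{C}^2$ functions $\sigma_2^{(m)}$ with the same monotonicity/concavity features, deriving the estimate uniformly in $m$, and passing to the limit by monotone convergence.
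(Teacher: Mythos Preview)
Your overall strategy for (ii) is close to the paper's, but part (i) has a genuine gap. You propose to differentiate $\int_0^n\sigma_2(g_n(y,t))\,dy$ over the full truncated domain and restrict to $(0,R)$ only at the end. This does not close the Gronwall argument. After discarding the (favorable) loss terms and applying (\ref{M(T)1}) to the coagulation gain, you are led to a term of the form
\[
\int_0^n\int_0^{n-z} K(y,z)\bigl[\sigma_2(g_n(y+z,t))+\sigma_2(g_n(z,t))\bigr]g_n(y,t)\,dy\,dz.
\]
With $K(y,z)\le k_1(1+y+z)$ you then assert that the factor $(1+y+z)$ ``pairs against one of the two $g_n$ factors.'' But only one bare factor $g_n(y,t)$ remains; the other has already been converted into $\sigma_2(g_n(\cdot))$ by (\ref{M(T)1}). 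Integrating $(1+y+z)g_n(y,t)$ in $y$ via Lemma~\ref{V(T)} leaves a residual weight $(1+z)$ on $\sigma_2(g_n(z,t))$ (and, after the change of variable $w=y+z$, a weight $(1+w)$ on $\sigma_2(g_n(w,t))$), so the right-hand side is $C\,V(T)\int_0^n(1+z)\sigma_2(g_n(z,t))\,dz$, not a constant multiple of $\int_0^n\sigma_2(g_n)$. Gronwall therefore does not close, and trying to work instead with $\int_0^n(1+y)\sigma_2(g_n)\,dy$ only pushes the problem to $(1+y+z)^2$. The paper avoids this by differentiating $\int_0^R\sigma_2(g_n)\,dy$ from the outset: on the relevant domain $y+z<R$ one has the crude but $n$-independent bound $(1+y+z)\le 1+R$, so the coagulation gain is controlled by $k_1(1+R)V(T)\int_0^R\sigma_2(g_n)$, and Gronwall gives (i) with $C(R,T)$ depending on $R$ precisely through this $1+R$.

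For (ii) your sketch is essentially right, but the bookkeeping is off: the bound $|k_\omega(y)|\lesssim \|\omega\|_{L^\infty}(1+y)^2$ you quote would produce an uncontrolled second moment $\int y^2 g_n$. What saves the estimate is that $\omega$ vanishes outside $(0,R)$: for $y>R$ one has $k_\omega(y)=\int_0^y F(z,y-z)[\omega(y-z)+\omega(z)]\,dz$ with only $z\in(0,R)$ (or $y-z\in(0,R)$) contributing, hence $|k_\omega(y)|\le 2k_2R(1+y)\|\omega\|_{L^\infty}$, which does integrate against $g_n$ by Lemma~\ref{V(T)}. The paper makes this $R$-dependence explicit by writing the four gain/loss integrals directly over $y\in(0,R)$ rather than passing through (\ref{nctp3}).
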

\begin{proof} For any $R <n$, using (\ref{1cfe})--(\ref{1in1}), Leibniz's rule and Fubini's theorem, we have
\begin{align}\label{eqint2}
\frac{d}{dt}\int_0^R \sigma_2(g_n(y,t))dy \leq &\frac{1}{2} \int_0^R \int_0^{R-z} \sigma_2^{'}(g_n(y+z,t))K_n(y,z)g_n(y,t)g_n(z,t)dydz\nonumber\\
&+\int_0^n \int_0^{R} \sigma_2^{'}(g_n(y,t)) F_n(y, z-y) g_n(z,t)dydz.
\end{align}
Now, we estimate each term on the right-hand side of (\ref{eqint2}) individually. Calculating the first term on the right-hand side of (\ref{eqint2}), by using $(H3)$, (\ref{M(T)1}) and Lemma \ref{V(T)}, as follows
\begin{align}\label{est1}
&\frac{1}{2} \int_0^R \int_0^{R-z}  \sigma_2^{'}(g_n(y+z,t))K_n(y,z)g_n(y,t)g_n(z,t)dydz\nonumber\\
\leq & \frac{1}{2}k_1 \int_0^R \int_0^{R-z}(1+y+z)[\sigma_2(g_n(y+z,t))+\sigma_2(g_n(z,t))]g_n(y,t)dydz\nonumber\\
\leq & k_1 (1+R)\int_0^R \int_0^{R}\sigma_2(g_n(z,t))g_n(y,t)dydz\nonumber\\
\leq & C_1(R, T)\int_0^R\sigma_2(g_n(y,t))dy,
\end{align}
where $C_1(R, T):=k_1(1+R)V(T)$. By using (\ref{M(T)1}), $(H4)$ and Lemma \ref{V(T)}, the second term on the right-hand side to (\ref{eqint2}) can be estimated as
\begin{align}\label{est2}
&\int_0^n \int_0^{R}  \sigma_2^{'}(g_n(y,t)) F_n(y, z-y) g_n(z,t)dydz\nonumber\\
\leq &k_2 V(T) \int_0^{R} [\sigma_2(g_n(y,t))+\sigma_2(1)]dy\nonumber\\
\leq & C_2(T)\int_0^{R} \sigma_2(g_n(y,t))dy+ C_3(R, T),
\end{align}
where $C_2(T):=k_2V(T)$ and $C_3(R, T):=k_2V(T)R\sigma_2(1)$. Inserting (\ref{est1}) and (\ref{est2}) into (\ref{eqint2}), we find
\begin{align*}
\frac{d}{dt}\int_0^R \sigma_2(g_n(y,t))dy\leq &C_1(R, T)\int_0^R\sigma_2(g_n(y,t))dy\nonumber\\
&+C_2(T)\int_0^R\sigma_2(g_n(y,t))dy+C_3(R, T).
\end{align*}
Then applying Gronwall's inequality, monotonicity of $\sigma_2$ and $g_n^{in}\leq g^{in}$, we obtain
\begin{eqnarray*}
\int_0^R \sigma_2(g_n(y,t))dy\leq C(R, T),
\end{eqnarray*}
where $C(R, T)$ is a constant depending on $R$ and $T$.
This completes the proof of Lemma \ref{last} $(i)$. In order to prove the second part of Lemma \ref{last}, the following term is estimated, by using Leibniz's rule, $(H3)$, $(H4)$ and Fubini's theorem, as
\begin{align*}
\bigg|\frac{d}{dt}\int_0^R  g_n(y,t)\omega(y)dy \bigg | \leq  &\frac{1}{2}  \int_0^R\int_0^{R-y} |G_{\omega} (y, z)|  K_n(y,z)g_n(y,t)g_n(z,t)dydz\nonumber\\
&+\int_0^R \int_{R-y}^n |{\omega}(y)|  K_n(y,z)g_n(y,t)g_n(z,t)dzdy\nonumber\\
&+\frac{1}{2}\int_0^R\int_0^y |\omega(y)| F_n(y-z,z)g_n(y,t)dzdy\nonumber\\
&+\int_0^R\int_0^{n-y} |\omega(y)| F_n(y, z) g_n(y+z,t)dzdy\nonumber\\
\leq & C_4(R, T)\|\omega\|_{L^{\infty}]0,R[},
\end{align*}
where $C_4(R, T):=5/2k_1(1+R)V(T)^2+3/2k_2RV(T)$. This completes the proof of Lemma \ref{last} $(ii)$.
\end{proof}
Next, the following result is established to study the behaviour of $g_n$ for large values of $y$.
\begin{lem}\label{massest1}
Let $(H1)-(H4)$ hold and the initial data $g_n^{in}(y) \in L_1^1(0,\infty )$. Then, for $T>0$, there is a constant $C(T)$ depending on $T$ such that,
\begin{eqnarray*}
\hspace{-3.6cm}(i)~~ \sup_{t\in [0,T]}\int_0^n \sigma_1 (y)g_n(y,t)dy\leq C(T),~~~ \mbox{for~ every}~~n\geq 1,
\end{eqnarray*}
\begin{eqnarray*}
\hspace{-2.7cm}(ii)~~ \int_0^T \int_0^n\int_{n-y}^n \sigma_1 (y)K(y,z)g_n(y,s)g_n(z,s)dzdyds\leq C( T),
\end{eqnarray*}
and
\begin{eqnarray*}
(iii)~~ \int_0^T \int_0^n\int_{0}^{n-y} [\sigma_1 (y+z)-\sigma_1 (y)-\sigma_1 (z)]F(y,z)g_n(y+z,s)dzdyds\leq C( T),
\end{eqnarray*}
where $\sigma_1 $ is the convex function with concave derivative and satisfying (\ref{convex1})--(\ref{convex2}).
\end{lem}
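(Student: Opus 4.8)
The plan is to deduce all three estimates from a single differential inequality for the weighted first moment $t\mapsto\int_0^n\sigma_1(y)g_n(y,t)\,dy$. First I would take $\omega=\sigma_1\chi_{]0,n[}\in L^\infty(0,\infty)$ in the weak formulation (\ref{nctp3}) --- which, since $g_n$ is supported in $]0,n[$, amounts to multiplying (\ref{tncfe}) by $\sigma_1(y)$ and integrating over $]0,n[$, cf.\ the proof of Lemma \ref{last} --- and differentiate in $t$ to get
\begin{align*}
\frac{d}{dt}\int_0^n\sigma_1(y)g_n(y,t)\,dy & = \frac12\int_0^n\int_0^n G_{\sigma_1}(y,z)K(y,z)g_n(y,t)g_n(z,t)\,dzdy \\
& \quad + \frac12\int_0^n k_{\sigma_1}(y)g_n(y,t)\,dy ,
\end{align*}
where $G_{\sigma_1}(y,z)=\sigma_1(y+z)\chi_{]0,n[}(y+z)-\sigma_1(y)-\sigma_1(z)$. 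Splitting $]0,n[^{2}$ into $D_1:=\{y+z<n\}$ and $D_2:=\{y+z\ge n\}$ and using the symmetry of $K$ and $g_n$, the coagulation term becomes
\[
\frac12\iint_{D_1}\bigl[\sigma_1(y+z)-\sigma_1(y)-\sigma_1(z)\bigr]K g_n g_n-\iint_{D_2}\sigma_1(y)K g_n g_n ,
\]
so the non-conservative (``gelation'') contribution $\iint_{D_2}\sigma_1(y)Kg_ng_n$ --- the integrand of $(ii)$ --- appears \emph{with a minus sign}. Since $\sigma_1$ is convex with $\sigma_1(0)=0$ it is superadditive, hence $\tilde\sigma_1\ge0$, $k_{\sigma_1}\le0$ and $\frac12\int_0^n k_{\sigma_1}g_n=-\frac12\int_0^n|k_{\sigma_1}|g_n\le0$; and after the substitution $w=y+z$ --- the one relating the fragmentation terms in (\ref{nct3}) and (\ref{nctp3}) --- the quantity $\int_0^n|k_{\sigma_1}(y)|g_n(y,t)\,dy$ is precisely the integrand of $(iii)$.

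It then remains to estimate the only positive term, the integral over $D_1$. Since $\sigma_1$ is, like $\sigma_2$, a non-decreasing convex function with concave derivative, Lemma \ref{convexlemma} applies to it and gives $0\le\sigma_1(y+z)-\sigma_1(y)-\sigma_1(z)\le 2\frac{y\sigma_1(z)+z\sigma_1(y)}{y+z}$. Using $(H3)$, the elementary bounds $\frac{y\sigma_1(z)+z\sigma_1(y)}{y+z}\le\sigma_1(y)+\sigma_1(z)$ and $\frac{1+y+z}{y+z}=1+\frac1{y+z}$, and then Lemma \ref{V(T)}, I would obtain
\[
\frac12\iint_{D_1}\bigl[\sigma_1(y+z)-\sigma_1(y)-\sigma_1(z)\bigr]Kg_ng_n\le 2k_1\Bigl(\int_0^n(1+y)g_n\,dy\Bigr)\Bigl(\int_0^n\sigma_1(y)g_n\,dy\Bigr)\le C(T)\int_0^n\sigma_1(y)g_n(y,t)\,dy .
\]
Collecting the contributions,
\[
\frac{d}{dt}\int_0^n\sigma_1(y)g_n(y,t)\,dy+\iint_{D_2}\sigma_1(y)Kg_ng_n+\frac12\int_0^n|k_{\sigma_1}(y)|g_n(y,t)\,dy\le C(T)\int_0^n\sigma_1(y)g_n(y,t)\,dy .
\]
Since $\int_0^n\sigma_1(y)g_n^{in}(y)\,dy\le\int_0^\infty\sigma_1(y)g^{in}(y)\,dy<\infty$ by (\ref{convex2}) (using $g_n^{in}\le g^{in}$), I would discard the two non-negative terms on the left and apply Gronwall's lemma to obtain $(i)$; re-integrating the full inequality over $(0,T)$ and bounding its right-hand side by $(i)$ then yields $(ii)$, and the fragmentation term identified above yields $(iii)$.

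The step I expect to cause the most trouble is the bookkeeping of the truncation: one must retain the factor $\chi_{]0,n[}(y+z)$ in every manipulation so that the non-conservative loss term reappears on the left with the correct sign --- this is exactly the mechanism behind the mass conservation in Theorem \ref{thm1} --- and one must execute the change of variables $w=y+z$ in the fragmentation double integrals to recognise $-k_{\sigma_1}$. The convex-analysis ingredients (superadditivity of $\sigma_1$ and the two-sided estimate of Lemma \ref{convexlemma}) are then invoked essentially verbatim as for $\sigma_2$.
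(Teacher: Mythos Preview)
Your proposal is correct and follows essentially the same route as the paper: take $\omega=\sigma_1$ in the weak formulation, split the coagulation double integral into the regions $\{y+z<n\}$ and $\{y+z\ge n\}$ (the paper's $P_n$ and $Q_n$), observe that $Q_n\le 0$ and that the fragmentation contribution is nonpositive by superadditivity of $\sigma_1$, bound $P_n$ via (\ref{convex4}) and $(H3)$, and conclude $(i)$ by Gronwall, then read off $(ii)$ and $(iii)$ from the discarded nonpositive terms. The only cosmetic difference is in the $P_n$ estimate: the paper splits further into $\{y+z<1\}$ and $\{y+z\ge1\}$ to control the factor $(1+y+z)/(y+z)$ and ends up with the constant $16k_1V(T)$, whereas your single-step bound (which amounts to $\tfrac{(1+y+z)y}{y+z}\le 1+y$) gives the cleaner constant $2k_1V(T)$.
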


\begin{proof} Setting $\omega (y)=\sigma_1(y)$, for all $y \in (0,n)$ into (\ref{nct3}), we obtain
\begin{align}\label{equality}
&\int_0^n \sigma_1(y)g_n(y,t)dy\nonumber\\
\leq &C+\frac{1}{2}\int_0^t [P_n(s)+Q_n(s)]ds\nonumber\\
&-\frac{1}{2}\int_0^t \int_0^{n} \int_0^{n-z} [\sigma_1(y+z)-\sigma_1(y)-\sigma_1(z)]  F(y, z) g_n(y+z,s)dydzds,
\end{align}
where, from (\ref{convex2}), the monotonicity of $\sigma_1$ and $g_n^{in}\leq g^{in}$, we set
\begin{eqnarray*}
\hspace{-2.9cm}C:=\int_0^{\infty} \sigma_1(y)g^{in}(y)dy<\infty,
\end{eqnarray*}
\begin{eqnarray*}
P_n(s):= \int_0^n \int_0^{n-y}G_{\sigma_1}(y,z)K(y,z)g_n(y,s)g_n(z,s)dzdy
\end{eqnarray*}
and
\begin{eqnarray*}
Q_n(s):= \int_0^n \int_{n-y}^n G_{\sigma_1}(y,z)K(y,z)g_n(y,s)g_n(z,s)dzdy.
\end{eqnarray*}
Using the properties of $\sigma_1$ from (\ref{convex4}) and  hypothesis $(H3)$, we estimate the following term as
\begin{align}\label{est}
K(y,z)G_{\sigma_1}(y,z)= &K(y,z)[\sigma_1 (y+z)\chi_{]0,n[}(y+z)-\sigma_1(y)-\sigma_1(z)]\nonumber\\
 \leq &K(y,z)[ \sigma_1 (y+z)-\sigma_1(y)-\sigma_1(z)]\nonumber\\
 \leq &2k_1(1+y+z) \frac{y\sigma_1(z)+z\sigma_1(y)}{y+z}.
\end{align}
Using (\ref{est}), we estimate $P_n(s)$ defined in (\ref{equality}) as
\begin{align}\label{est3}
\hspace{-.18cm}P_n(s) \leq & 2k_1\int_0^n \int_0^{\min\{1-y, 0\}}(1+y+z) \frac{y\sigma_1(z)+z\sigma_1(y)}{y+z} g_n(y,s)g_n(z,s)dzdy\nonumber\\
&+2k_1\int_0^n \int_{\min\{1-y, 0\}}^{n-y}(1+y+z) \frac{y\sigma_1(z)+z\sigma_1(y)}{y+z} g_n(y,s)g_n(z,s)dzdy.
\end{align}
Further, each integral on the right-hand side of (\ref{est3}) is estimated separately. Let us simplify the first term on the right-hand side to (\ref{est3}) by using  Lemma \ref{V(T)} as
\begin{align}\label{est4}
2k_1\int_0^n \int_0^{\min\{1-y, 0\}}&(1+y+z) \frac{y\sigma_1(z)+z\sigma_1(y)}{y+z} g_n(y,s)g_n(z,s)dzdy\nonumber\\
&\leq  8k_1 V(T) \int_0^n  \sigma_1(y)g_n(y,s)dy.
\end{align}
Again using  Lemma \ref{V(T)} the second integral on the right-hand side of (\ref{est3}) can be estimated as
\begin{align}\label{est5}
2k_1\int_0^n \int_{\min\{1-y, 0\}}^{n-y}&(1+y+z) \frac{y\sigma_1(z)+z\sigma_1(y)}{y+z} g_n(y,s)g_n(z,s)dzdy\nonumber\\
\leq & 4 k_1\int_0^n \int_{\min\{1-y, 0\}}^{n-y}(y+z) \frac{y\sigma_1(z)+z\sigma_1(y)}{y+z} g_n(y,s)g_n(z,s)dzdy\nonumber\\
\leq & 8k_1V(T)\int_{0}^{n} \sigma_1(y) g_n(y,s)dy.
\end{align}

Finally, inserting estimates (\ref{est4}), and (\ref{est5})   into (\ref{est3}), we have
\begin{eqnarray}\label{Pn}
P_n(s) \leq 16k_1 V(T) \int_0^n  \sigma_1(y)g_n(y,s)dy.
\end{eqnarray}
Now the term $Q_n(s)$ defined in (\ref{equality}) is considered. Since $y+z \geq n$ in this term, therefore, we have
\begin{eqnarray}\label{bountct}
G_{\sigma_1}(y,z)=-\sigma_1(y)-\sigma_1(z).
\end{eqnarray}
Using (\ref{bountct}), it is clear that $Q_n(s) \leq 0$.
Moreover, the monotonicity of $\sigma_1^{'}$ and $\sigma_1(0)=0$ which ensure that $ \sigma_1(y+z)\geq \sigma_1(y)+\sigma_1(z)$ and thus we have
\begin{eqnarray}\label{fp}
 -\frac{1}{2}\int_0^t \int_0^{n} \int_0^{n-z}  [\sigma_1 (y+z)-\sigma_1 (y)-\sigma_1 (z)]  F(y, z) g_n(y+z,s)dydzds \leq 0.
\end{eqnarray}
Finally, substituting (\ref{Pn}), $Q_n(s) \leq 0$ and (\ref{fp}) into (\ref{equality}), we estimate
\begin{eqnarray*}
\int_0^n \sigma_1(y)g_n(y,t)dy \leq  C+16k_1V(T)\int_0^t \int_0^n  \sigma_1(y)g_n(y,s)dyds.
\end{eqnarray*}
Therefore, Gronwall's inequality gives $(i)$, i.e.
\begin{eqnarray*}
\int_0^n \sigma_1(y)g_n(y,t)dy \leq C(T),
\end{eqnarray*}
where $C(T)$ depends on $T$. Moreover, $(ii)$ and $(iii)$ in Lemma \ref{massest1} clearly follow from the substitution of (\ref{Pn}), $Q_n(s) \leq 0$ and (\ref{fp})  into (\ref{equality}) and $(i)$.
\end{proof}

Further, for a fixed $T>0$, the equicontinuity of the family $\{g_n(t), t\in[0,T]\}$ with respect to time $t$ in $L^1(0,\infty)$ can be shown similar to \cite{Giri:2011, Giri:2012}.\\

Then from a refined version of \textit{Arzel\`{a}-Ascoli theorem}, see Theorem $2.1$ in  \cite{Stewart:1989} or page $228$ in \cite{Ash:1972}, Lemma \ref{V(T)} and Lemma \ref{last} $(i)$, we conclude that there exist a subsequence (${g_{n_k}}$) and a non-negative function $g\in L^\infty([0,T];L^1(0,\infty))$ such that
\begin{eqnarray*}
\lim_{n_k\to\infty} \sup_{t\in [0,T]}{\left\{ \left| \int_0^\infty  [ g_{n_k}(y,t) - g(y,t)]\ \phi(y)\ dy \right| \right\}} = 0, \label{vittel}
\end{eqnarray*}
for all $T>0$ and $\phi \in L^\infty(0,\infty)$.\\
This implies
\begin{eqnarray}\label{econtresult}
 g_{n_k}(y,t) \rightharpoonup  g(y,t)~~~ \mbox{in} ~~~L^1(0, \infty )  ~~~\mbox{as}~~n_k \rightarrow \infty,
\end{eqnarray}
converges uniformly for all $t \in [0,T]$ to some $g \in \mathcal{C}([0,T]; w-L^1(0, \infty ))$.

\begin{proof}\emph{of Theorem \ref{thm1}}. From de la Vall\'{e}e-Poussin theorem, Lemmas \ref{V(T)}--\ref{massest1}, (\ref{convex1}), Dunford-Pettis theorem and (\ref{econtresult}), we conclude that $(g_n)$ is relatively compact in $\mathcal{C}([0, T]; w-L_1^1(0, \infty ))$ for each $T>0$. There is thus a subsequence of $(g_n)$ (say $g_n$) and a nonnegative function $g\in \mathcal{C}([0, \infty); w-L_1^1(0, \infty ))$ such that
\begin{eqnarray}\label{weakcon1}
g_n \to g ~~~\mbox{in} ~~\mathcal{C}([0,T]; w-L_1^1(0, \infty ))
\end{eqnarray}
for each $T>0$. Then by using $(H3)$, $(H4)$, Definition \ref{definition} and (\ref{weakcon1}), it can easily be shown that each integral on the right-hand of (\ref{tncfe}) converges weakly to each integral on the right-hand of (\ref{1cfe}) respectively, see \cite{Banasiak:2015, Escobedo:2003, Laurencot:2002L, Stewart:1989}, i.e.
\begin{eqnarray*}
\rho_i^n(g_{n}) \rightharpoonup \rho_i(g)~~~~\mbox{weakly~~in}~~L^1((0, M)\times(0,T)),
\end{eqnarray*}
where $i=1,2,3,4$, for each $M>0$ and $T>0$. Thus, this ensures that $g$ is indeed a solution to (\ref{1cfe})--(\ref{1in1}).\\

In order to complete the proof of Theorem \ref{thm1}, we need to show the mass conserving property of the solution to (\ref{1cfe})--(\ref{1in1}).
From (\ref{tncfe1}), it is sufficient to show that
\begin{eqnarray}\label{lastmass}
\lim_{n \to \infty} \frac{1}{2} \int_0^t \int_0^n\int_{n-y}^n (y+z)K(y, z) g_n(y,s) g_n(z,s)dzdyds =0.
\end{eqnarray}
 To prove this result, we use the following argument: if $y+z>n$, then either $y>n/2$ or $z>n/2$ so that $\sigma_1(y)+\sigma_1(z)\geq \sigma_1(n/2)$ for a non-negative and the increasing convex function
$\sigma_1 $.  Finally, (\ref{lastmass}) can easily be shown by using Lemma \ref{massest1} $(ii)$, (\ref{convex1}) and  the same procedure as in \cite{Filbet:2004I}.
This completes the proof of the Theorem \ref{thm1}.
\end{proof}

\section*{Acknowledgments} This work was supported by Faculty Initiation Grant $MTD/$ $FIG/100680$, Indian Institute of Technology Roorkee, Roorkee-247667, India. The authors would also like to thank University Grant Commission (UGC), No. $6405/11/$ $44$, India, for providing Ph.D fellowship to Prasanta Kumar Barik. We also wish to thank the referee for his valuable comments and suggestions that helped to improve the manuscript.

\medskip
Received xxxx 20xx; revised xxxx 20xx.
\medskip

\end{document}